\documentclass[11pt]{amsart}
\usepackage{amsmath}
\usepackage{bbding}
\usepackage{tikz}
\usepackage{amsmath,amssymb}
\theoremstyle{plain}
\newtheorem{theorem}{Theorem}[section]
\newtheorem{lemma}[theorem]{Lemma}
\newtheorem{prop}[theorem]{Proposition}

\theoremstyle{definition}
\newtheorem{remark}[theorem]{Remark}

\numberwithin{equation}{section}

\def\be{\begin{equation}}
\def\ee{\end{equation}}

\begin{document}

\title[Mixed Local-Nonlocal Operator]
{Liouville Theorems for the Lane-Emden Equation\\ Involving a Mixed Local-Nonlocal Operator}

\author[Guo]{Yahong Guo}
\address{School of Mathematical Sciences\\
Shanghai Jiao Tong University\\
Shanghai, 200240, China} \email{yhguo@sjtu.edu.cn}
\author[Li]{Congming Li}
\address{School of Mathematical Sciences\\
Shanghai Jiao Tong University\\
Shanghai, 200240, China}  \email{congming.li@sjtu.edu.cn}
\author[Xie]{Jiongduo Xie}
\address{School of Mathematical Sciences\\
Shanghai Jiao Tong University\\
Shanghai, 200240, China}
\email{jiongduoxie@outlook.com
}
\keywords{Mixed local-nonlocal operators; Liouville Theorem; Supersolutions.}

\begin{abstract}
In this paper, we investigate the existence of positive supersolutions for the following mixed local–nonlocal Lane–Emden type equation:
$$
-\Delta u+(-\Delta)^s u=u^q\quad\text{in }\mathbb R^n,  
$$
where $n\geq 3$, $s\in(0,1)$, and $q>1$. More precisely, we prove that the equation admits positive distributional supersolutions if and only if $q>\frac{n}{n-2s}$. In the process, we establish several novel properties of mixed local–nonlocal operators, including sharp asymptotic estimates for the fundamental solution, a maximum principle in the distribution sense, and an equivalent integral inequality for supersolutions.
\end{abstract}

\maketitle

\section{Introduction}\label{sec-Intro}

In this paper, we consider the following Lane-Emden type equation:
\begin{equation}\label{lane-emden}
-\Delta u+(-\Delta)^s u=u^q\quad\text{in }\mathbb R^n,  
\end{equation}
where $n\geq 3$, $s\in(0,1)$, $q>1$, and
$$
(-\Delta)^s u(x)=C_{n,s} PV \int_{\mathbb{R}^n}\frac{u(x)-u(y)}{|x-y|^{n+2s}}dy=C_{n,s} \lim_{\varepsilon\searrow0} \int_{\mathbb{R}^n\backslash B_{\varepsilon}(x)}\frac{u(x)-u(y)}{|x-y|^{n+2s}}dy,
$$
with 
$C_{n,s}=\frac{2^{2s}\Gamma(s+n/2)}{\pi^{n/2}\Gamma(-s)}$. 

The operator $-\Delta +(-\Delta)^s $ combines classical local diffusion (Brownian motion) with nonlocal long-range jumps (a stable L\'evy process). It arises naturally in models where short-range and anomalous diffusion compete, and it provides a broader mathematical framework for analyzing such mixed processes. This type of  operator gives rise to interesting mathematical questions, notably due to the lack of scale invariance and the coupling of local and nonlocal effects---where $-\Delta$ dominates locally while $(-\Delta)^s$ prevails at long distances, as illustrated in Proposition \ref{Gamma-estimat}. In recent years, such operators have been extensively studied; see, e.g., \cite{AC2021,BI2008,BDVV2022,BDVV2023,BMV2024,BPV2025,BVDV2021,Bdd2022,DM2024,dEJ2017,DPV2022,FR2024,GK2022,GL2023,GM1992,GL1984,RS2015,SVWZ2022,SVWZ2025}.
Specific applications include control problems and stochastic games \cite{BJK2010,BK2023,DRZ2021,JK2006}, phase transition problems \cite{CS2016,CV2013},
damping in elastic systems \cite{DP2021}, population dynamics \cite{DPV2023,DV2021}.

Liouville theorems are fundamental tools in the study of partial differential equations, providing crucial a priori estimates and characterizing fundamental solution properties such as existence and uniqueness. Their significance has led to extensive research in diverse settings, including both local and nonlocal elliptic and parabolic equations. We want to study the following Liouville-type Theorem: for some elliptic operator $L$, 
\begin{equation}\label{L-super}
Lu\geq u^q\quad \text{in }\mathbb R^n, 
\end{equation}
admits no positive solutions if and only if $q<q_{L}$, where $q_{L}$ depend only on $L$ and $n$. In 1980, Gidas \cite{Gidas1980} proved that for $L=-\Delta$, \eqref{L-super} has no positive classical solutions if and only if $q<\frac{n}{n-2}$, see also \cite{BCN1994}. For $p$-Laplacian $L=-\Delta_p$ $(p>1)$ defined by $-\Delta_pu=-\mathrm{div}(|\nabla u|^{p-2}\nabla u)$,  Mitidieri and Pokhozhaev \cite{MP1998}, Birindelli and Demengel \cite{BD2002}, and Serrin and Zou \cite{SZ2002} established that  the nonexistence of positive solutions to \eqref{L-super} is equivalent to the condition $p\geq n$ or $1<p<n$ and  $q\leq \frac{n(p-1)}{n-p}$. For fractional Laplacian $L=(-\Delta)^s$ $(s\in(0,1))$, it was shown by Felmer and Quaas \cite{FQ2011} and Wang and Xiao \cite{WX2016} that there exist no positive solutions to \eqref{L-super}  if and only if $q\leq \frac{n}{n-2s}$. More generally, Del Pezzo and Quaas \cite{DQ2026} and Liu \cite{Liu2025} considered fractional $p$-Laplacian $L=(-\Delta_p)^s$ $(p>1,s\in(0,1))$, which is defined by
$$
(-\Delta_p)^s u(x)= PV \int_{\mathbb{R}^n}\frac{|u(x)-u(y)|^{p-2}(u(x)-u(y))}{|x-y|^{n+ps}}dy.
$$
They obtained that \eqref{L-super} possesses no positive solutions if and only if $sp\geq n$ or $sp<n$ and  $q\leq \frac{n(p-1)}{n-sp}$. For more general local and nonlocal operators, we refer to \cite{AS2011,BBF2026,CL2000,FQ2011,MP2001}. 

Here we also mention another widely studied Liouville-type theorem, which traces back to the celebrated work of Gidas and Spruck \cite{GS1981}. Specifically, through delicate integral identities, they proved that for $1<q<\frac{n+2}{n-2}$, the equation $-\Delta u=u^q$ in $\mathbb R^n$ admits no positive solutions. Later, Chen and Li \cite{CL1991} provided another elegant proof using the moving plane method. Their works greatly stimulated the study of such Liouville-type theorems for a wide range of more general operators (see \cite{CLL2017,CLM2020,MP2001,SZ2002}). Our paper establishes the nonexistence of positive solutions for the mixed local-nonlocal operator $-\Delta+(-\Delta)^s$ in the regime $q\leq \frac{n}{n-2s}$, where the critical exponent of the Liouville theorem associated with this operator remains unknown. See Barrios, Del Pezzo, and Quaas \cite{BDQ2025} for some discussion of the critical exponent. 

We call $u$ is a positive classical supersolution of \eqref{lane-emden}, if $u\in C^2(\mathbb R^n)\cap\mathcal{L}^{2s}(\mathbb R^n)$ satisfies
$$
-\Delta u+(-\Delta)^s u\geq u^q\quad\text{in }\mathbb R^n,  
$$
where 
$$
\mathcal{L}^{2s}(\mathbb R^n)=\left\{u\in L^1_{loc}(\mathbb R^n):\int_{\mathbb R^n}\frac{|u(x)|}{1+|x|^{n+2s}}dx<\infty\right\}.
$$
We will mainly consider the positive distributional supersolution $u$ of \eqref{lane-emden}, i.e., $u\in \mathcal{L}^{2s}(\mathbb R^n)\cap L_{loc}^q(\mathbb R^n) $ such that for any $\phi\in C^{\infty}_c(\mathbb R^n)$ with $\phi\geq 0$,
$$
\int_{\mathbb R^n}u[(-\Delta)\phi+(-\Delta)^s\phi]dx\geq \int_{\mathbb{R}^n}u^q\phi dx.
$$
Note that the above definition is well defined, since it is easy to check that $|(-\Delta)^s\phi(x)|\leq C/(1+|x|^{n+2s})$ for $\phi\in C^{\infty}_c(\mathbb R^n)$. 

Now, we state our main theorem.

\begin{theorem}\label{main-theorem-1}
If $1<q\leq \frac{n}{n-2s}$, then \eqref{lane-emden} has no positive distributional supersolutions.

If $q> \frac{n}{n-2s}$, then \eqref{lane-emden} has a smooth bounded positive classical supersolution.
\end{theorem}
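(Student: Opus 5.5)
The proof splits into the nonexistence range and the existence range.

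\textbf{Nonexistence for $1<q\le \frac{n}{n-2s}$.} I would use a rescaled test-function argument in the style of Mitidieri--Pokhozhaev. Fix $\varphi\in C_c^\infty(\mathbb R^n)$ with $0\le\varphi\le1$, $\varphi=1$ on $B_1$ and $\operatorname{supp}\varphi\subset B_2$. Set $\phi_R(x)=\varphi(x/R)^m$ with $m$ large, say $m>\frac{q}{q-1}$. The key pointwise estimate to establish is
\[
|(-\Delta)\phi_R|+|(-\Delta)^s\phi_R|\le C\,\phi_R^{1/q}\big(R^{-2}+R^{-2s}\big)\ \text{on } B_{2R},\qquad |(-\Delta)^s\phi_R(x)|\le \frac{CR^{n}}{|x|^{n+2s}}\ \text{for } |x|>4R .
\]
For the fractional part, the factor $\phi_R^{1/q}$ is not literally available near the boundary of the support. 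I would therefore use the standard convexity inequality $(-\Delta)^s(\psi^m)\le m\psi^{m-1}(-\Delta)^s\psi$. Alternatively, I would split the integral $\int u(-\Delta)^s\phi_R$ into the regions $B_{4R}$ and $B_{4R}^c$, and control the outer region using $u\in\mathcal L^{2s}$ only where needed.

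Testing the inequality with $\phi_R$ and applying Hölder with exponents $q,q'$ gives
\[
\int u^q\phi_R\le C\Big(\int_{\operatorname{supp}\phi_R}u^q\phi_R\Big)^{1/q}\big(R^{-2s}R^{n/q'}\big)+\text{tail},
\]
where I used $R^{-2}\le R^{-2s}$ for $R\ge1$; this is the point where the nonlocal part dominates at large scales. This yields $\int_{B_R}u^q\le CR^{n-2sq'}$. When $q<\frac n{n-2s}$ the exponent $n-2sq'$ is negative, and letting $R\to\infty$ gives $u\equiv0$, a contradiction.

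In the critical case $q=\frac n{n-2s}$, the bound shows $u\in L^q(\mathbb R^n)$. I would then redo the Hölder step with the integral restricted to the annulus $B_{2R}\setminus B_R$ where $\nabla\phi_R\ne0$. Its contribution tends to $0$, forcing $\int u^q=0$. The nonlocal part is not supported on the annulus, so this restriction needs care. I would split $(-\Delta)^s\phi_R$ into a near piece bounded by $\phi_R^{1/q}$ times something small inside $B_{R/2}$, since $\phi_R$ is constant there and $(-\Delta)^s\phi_R=O(R^{-2s})$ only because of the far mass. I expect this critical-case bookkeeping to be the main obstacle.

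As a fallback, I would use the paper's integral-inequality characterization of supersolutions, $u\ge \int\Gamma(x-y)u^q(y)\,dy$, together with the asymptotics $\Gamma(x)\sim c|x|^{-(n-2s)}$ for large $|x|$ from Proposition \ref{Gamma-estimat}. These give $u(x)\ge c|x|^{-(n-2s)}$ at infinity. Iterating the integral inequality, since $q(n-2s)\le n$ makes $\int \Gamma(x-y)|y|^{-q(n-2s)}\,dy$ diverge or produce a log gain, leads to a contradiction.

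\textbf{Existence for $q>\frac n{n-2s}$.} I would take $u=\varepsilon\,w$ with $w=\Gamma*f$, where $f(x)=(1+|x|^2)^{-\beta/2}$ and $\beta\in\big(2s,\;q(n-2s)\big)$ with $\beta<n$; such a $\beta$ exists because $q(n-2s)>n$. Then $w$ is smooth and positive, and $-\Delta w+(-\Delta)^s w=f$. The asymptotics of $\Gamma$ (near $0$ like $|x|^{2-n}$, at infinity like $|x|^{2s-n}$) give $w\le C(1+|x|)^{-(n-2s)}$. I would verify this bound by splitting the convolution into three regions: $|y|<|x|/2$, $|y-x|<|x|/2$, and the rest. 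Consequently
\[
(\varepsilon w)^q\le C\varepsilon^q(1+|x|)^{-q(n-2s)}\le C\varepsilon^q(1+|x|)^{-\beta}\le \varepsilon f,
\]
for small $\varepsilon$, so $\varepsilon w$ is a bounded smooth positive classical supersolution, and $w\in\mathcal L^{2s}$.
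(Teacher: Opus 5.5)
Your subcritical argument is correct and is essentially the paper's first proof: the convexity bound $(-\Delta)^s(\varphi_R^m)\le m\varphi_R^{m-1}(-\Delta)^s\varphi_R$, with $\varphi_R=\varphi(\cdot/R)$, removes any tail, and $m\ge q'$ gives $\varphi_R^{m-1}\le\phi_R^{1/q}$.

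\textbf{The critical case is not established.} Your repair for $q=\frac{n}{n-2s}$ assumes the nonlocal term is small inside $B_{R/2}$. It is not: since $\phi_R\equiv1$ on $B_R$ and $\phi_R=0$ off $B_{2R}$, one has $(-\Delta)^s\phi_R\ge cR^{-2s}$ on $B_{R/2}$, which is exactly the critical size. Because $n/q'=2s$, H\"older bounds $R^{-2s}\int_{B_{R/2}}u$ only by $C\|u\|_{L^q(B_{R/2})}$. Letting $R\to\infty$ then gives $\|u\|_{L^q}^q\le C\|u\|_{L^q}$, not $u\equiv0$.

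Your fallback is also incomplete. When $q(n-2s)=n$, $\int\Gamma(x-y)|y|^{-n}(\log|y|)^b\,dy<\infty$ for every $b$. So iteration only gives $u\ge C_k|x|^{2s-n}(\log|x|)^{a_k}$ with degenerating constants $C_k$, and no contradiction follows unless you track those constants quantitatively, which you do not do.

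The missing step is to combine two facts you already have. Your critical test-function bound gives $u\in L^q(\mathbb R^n)$. Proposition~\ref{equiv-ineq} and \eqref{mix-estimate-infty} give $u(x)\ge\int_{B_1}\Gamma(x-y)u^q(y)\,dy\ge c|x|^{2s-n}$ for $|x|\ge2$. Hence $u^q\ge c^q|x|^{-n}$, which is not integrable at infinity, a contradiction. Alternatively, split at $B_{\varepsilon R}$ instead of $B_{R/2}$, so the inner term is at most $C\varepsilon^{2s}\|u\|_{L^q}$, and let $R\to\infty$ and then $\varepsilon\to0$. The paper avoids test functions here altogether and derives $\int_{B_{2R}\setminus B_R}u^q\ge c\bigl(\int_{B_R}u^q\bigr)^q$ directly from the integral inequality.

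\textbf{The existence construction has a wrong parameter range.} With $2s<\beta<n$ you have $f\notin L^1$. The region $1\le|x-y|\le|x|/2$ alone gives, for large $|x|$,
$w(x)\ge c|x|^{-\beta}\int_{1\le|z|\le|x|/2}|z|^{2s-n}\,dz\ge c|x|^{2s-\beta}$,
which decays strictly more slowly than $|x|^{2s-n}$. So your bound $w\le C(1+|x|)^{-(n-2s)}$ is false. Moreover, $(\varepsilon w)^q\le\varepsilon f$ near infinity then forces $q(\beta-2s)\ge\beta$, so it fails for every $\varepsilon>0$ whenever $2s<\beta<\frac{2sq}{q-1}$. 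For example, take $n=3$, $s=\frac12$, $q=2$ and $\beta\in(1,2)$.

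The fix is to take $\beta\in(n,q(n-2s)]$, which is nonempty precisely because $q>\frac{n}{n-2s}$. Then $f\in L^1$, your three-region split really does give $w\le C(1+|x|)^{2s-n}$, and $(\varepsilon w)^q\le\varepsilon f$ for small $\varepsilon$. Alternatively, keep $\beta<n$ but require $\beta\ge\frac{2sq}{q-1}$ and use the correct bound $w\le C(1+|x|)^{2s-\beta}$.

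You should also check that $-\Delta w+(-\Delta)^s w=f$ holds pointwise. One way is to pair with $\phi\in C_c^\infty$, use \eqref{fundame-equ} and Fubini, and then use that $w$ is smooth and lies in $\mathcal L^{2s}$.

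With these corrections your construction is valid and genuinely different from the paper's. The paper takes the explicit profile $(1+|x|^2)^{(2s-n+\sigma)/2}$, imports Liu's asymptotics for its fractional Laplacian, and rescales by $\delta$ to handle the bounded region. Your route needs only the two-sided bounds on $\Gamma$ from Proposition~\ref{Gamma-estimat}.
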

\begin{remark}
Specifically, for $q>\frac{n}{n-2s}$, we define 
\begin{equation}\label{certain function}
f(x):=(1+|x|^2)^{\frac{2s-n+\sigma}{2}},\quad\text{for }x\in\mathbb R^n,
\end{equation}
where $\sigma>0$ is small so that
\begin{align*}
&\sigma+q(n-2s-\sigma)>n,\\
&2(\sigma+s)\leq n.
\end{align*}
Then, we can find a constant $\delta_0>1$ depending only on $n,s,q,\sigma$ such that for any $\delta\geq \delta_0$,
$$
f_\delta(x)=\delta^{-\frac2{q-1}} f(\frac x\delta)
$$
is a smooth bounded positive classical supersolution of \eqref{lane-emden}.
\end{remark}
\begin{remark}
   The above theorem implies the nonexistence of positive solutions for the mixed local-nonlocal Lane-Emden type    equation \eqref{lane-emden} in the regime $q\leq \frac{n}{n-2s}$. Furthermore, combining the existence result \cite{BDQ2025} with $q$ slightly smaller than $\frac{n+2}{n-2}$,  we conclude that the critical exponent for the existence of positive solutions to \eqref{lane-emden} lies in the interval $[\frac{n}{n-2s},\frac{n+2}{n-2})$. 
\end{remark}
{
The paper is organized as follows. 

In Section~\ref{Pre}, we establish sharp estimates for a fundamental solution and prove a maximum principle in the distributional sense. We then derive an equivalent integral inequality for supersolutions of \eqref{lane-emden}. In addition, we recall some properties of $(-\Delta)^s f$ where $f$ is given by \eqref{certain function}, as they play a fundamental role in the construction of the supersolutions.

In Section~\ref{proof-thm-main-1}, we prove Theorem~\ref{main-theorem-1}. In the subcritical case, we present two different methods. The first uses powers of $C^\infty_c$ functions as test functions and derives the decay of $\|u\|_{L^p(B_R)}$. The second proof employs the equivalent integral characterization and the lower bound estimate of the fundamental solution to derive a blow-up through iteration, thereby reaching a contradiction. This proof also extends to the critical case. In the supercritical case, we compare the behaviors of the Laplacian and the fractional Laplacian of $f$ at infinity. Due to the different scales of $-\Delta$ and $(-\Delta)^s$, we finally construct global supersolutions by a scaling argument.}

\section{Preliminary}\label{Pre}
\subsection{The fundamental solution }
In this subsection, we establish sharp estimates for a fundamental solution of $-\Delta +(-\Delta)^s$.

Recall that the heat kernel associated with $-\Delta +(-\Delta)^s$ is defined by \begin{equation}\label{mix-heat kernek}
H(x,t):=\int_{\mathbb R^n}e^{-t(|\xi|^2+|\xi|^{2s})+ix\cdot\xi}d\xi,\quad x\in\mathbb R^n,t>0,
\end{equation}
where $|\xi|^2+|\xi|^{2s}$ is the Fourier multiplier of $-\Delta +(-\Delta)^s.$
Based on the estimate for $H$ established by Song and Vondra\v cek \cite{SV2007}, we obtain the following sharp estimates for a fundamental solution near 0 and $\infty$. See also \cite{DSVZ2025} for further properties of $H$.

\begin{prop}\label{Gamma-estimat}
Let $\Gamma$ be defined by
\begin{equation}
\Gamma(x)=\int^{\infty}_{0}H(x,t)dt,
\end{equation}
where $H$ is given by \eqref{mix-heat kernek}. Then $\Gamma$ is nonnegative, radially symmetric, and nonincreasing as a function of $r=|x|$, and  there exists a positive constant $C$ such that
\begin{align}
\frac1C\frac{1}{|x|^{n-2}}\leq &\Gamma(x)\leq C\frac{1}{|x|^{n-2}},\quad\text{for }0<|x|< 1,\label{mix-estimate-zero}\\
\frac1C\frac{1}{|x|^{n-2s}}\leq &\Gamma(x)\leq C\frac{1}{|x|^{n-2s}},\quad\text{for }|x|\geq 1.\label{mix-estimate-infty}
\end{align}
Moreover, $\Gamma$ is a fundamental solution of the operator $-\Delta +(-\Delta)^s$, i.e., $\Gamma$ satisfies 
\begin{equation}\label{fundame-equ}
[-\Delta +(-\Delta)^s]\Gamma=\delta_0\quad\text{in }\mathbb R^n,
\end{equation}
in the distribution sense, where $\delta_0$ denotes the Dirac mass at point 0.
\end{prop}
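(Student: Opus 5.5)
The plan is to work directly from the heat kernel, using the two-sided estimates of Song and Vondra\v{c}ek \cite{SV2007} for the transition density of the independent sum of a Brownian motion and a symmetric $2s$-stable process. Up to constants, these estimates say that for $t\le 1$
$$
H(x,t)\asymp t^{-n/2}e^{-c|x|^2/t}+\min\Big\{t^{-n/2},\ \frac{t}{|x|^{n+2s}}\Big\},
$$
with Gaussian-type constants $c$ that may differ in the lower and upper bounds. For $t\ge 1$ we have $H(x,t)\asymp \min\{t^{-n/(2s)},\ t|x|^{-n-2s}\}$.

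The qualitative properties come first. Because $\xi\mapsto e^{-t(|\xi|^2+|\xi|^{2s})}$ is a product of two positive-definite radial functions, $H(\cdot,t)$ is the convolution of the Gaussian kernel with the fractional heat kernel $p_s(\cdot,t)$. Both kernels are positive, radial and radially nonincreasing. It is a standard fact that the convolution of two radial nonincreasing nonnegative functions is again radial and nonincreasing. Integrating in $t$ then transfers nonnegativity, radial symmetry and monotonicity to $\Gamma$, provided $\Gamma$ is finite for $x\ne 0$; finiteness follows from the bounds below since $n\ge3$.

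The asymptotics come from splitting $\Gamma(x)=\int_0^{|x|^2}+\int_{|x|^2}^\infty$ when $|x|<1$, and $\int_0^{|x|^{2s}}+\int_{|x|^{2s}}^\infty$ when $|x|\ge1$, and inserting the bounds for $H$.

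For $|x|<1$:
\begin{itemize}
\item The Gaussian part gives $\int_0^\infty t^{-n/2}e^{-c|x|^2/t}\,dt\asymp|x|^{2-n}$.
\item The stable part for $t\le1$ contributes at most $\int_0^{|x|^{2}}t|x|^{-n-2s}\,dt+\int_{|x|^2}^1 t^{-n/2}\,dt\lesssim |x|^{4-n-2s}+|x|^{2-n}\lesssim|x|^{2-n}$.
\item The range $t\ge1$ contributes $O(1)$, because $n/(2s)>1$.
\end{itemize}
This yields the bounds for $0<|x|<1$. The lower bound already follows from the Gaussian term restricted to $t\in(|x|^2/2,|x|^2)$.

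For $|x|\ge1$, the dominant piece is $\int_1^\infty\min\{t^{-n/(2s)},t|x|^{-n-2s}\}\,dt\asymp |x|^{2s-n}$. The crossover sits at $t=|x|^{2s}$, and both sides give $|x|^{2s-n}$. The remaining pieces for $t\le 1$ are $\lesssim |x|^{-n-2s}+e^{-c|x|^2}$, which are negligible.

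For the equation $[-\Delta+(-\Delta)^s]\Gamma=\delta_0$, take $\phi\in C_c^\infty$ and use Fubini. First, $\Gamma\in L^1_{loc}\cap\mathcal L^{2s}$ by the estimates. Second, $\int H(x,t)[(-\Delta+(-\Delta)^s)\phi](x)\,dx=-\frac{d}{dt}\int H(x,t)\phi(x)\,dx$, which one can verify via Fourier transform with the normalization of $H$ fixed so that $\int H(\cdot,t)=1$. Integrating in $t$ from $\varepsilon$ to $T$ gives $\int H(\cdot,\varepsilon)\phi-\int H(\cdot,T)\phi$. As $\varepsilon\to0$ this tends to $\phi(0)$ by the approximate-identity property, and as $T\to\infty$ the second term tends to $0$ since $\|H(\cdot,T)\|_\infty\lesssim T^{-n/(2s)}$.

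The main obstacle is justifying the interchange of $\int dt$ and $\int dx$. The tool for this is the bound $|(-\Delta)^s\phi(x)|\le C(1+|x|)^{-n-2s}$ together with $\int_0^\infty\int H(x,t)(1+|x|)^{-n-2s}\,dx\,dt=\int\Gamma(x)(1+|x|)^{-n-2s}\,dx<\infty$, which the estimates just proved guarantee.
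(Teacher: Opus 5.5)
Your proposal is correct. The two-sided bounds are obtained essentially as in the paper. The paper also inserts the Song--Vondra\v{c}ek bounds, which it writes case by case as $q_1,q_2$; these agree up to constants with your two-term expression for $t\le 1$ and with $\min\{t^{-n/(2s)},t|x|^{-n-2s}\}$ for $t\ge 1$. It then splits the time integral at $|x|^2$, $1$ and $|x|^{2s}$. Your explicit lower bound from $t\in(|x|^2/2,|x|^2)$ simply spells out what the paper calls ``analogous.''

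The genuine differences are in the other two parts. For nonnegativity, radial symmetry and monotonicity, the paper cites Theorem~3.1 of \cite{DSVZ2025}. You instead use $H(\cdot,t)=p^{(2)}(\cdot,t)*p^{(2s)}(\cdot,t)$ and the elementary fact that convolutions of symmetric decreasing functions are symmetric decreasing. This is self-contained except for the radial monotonicity of the stable kernel, which follows e.g.\ by subordination.

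For the equation, the paper identifies $\widehat{\Gamma}=(|\xi|^2+|\xi|^{2s})^{-1}$ by testing against $\phi\in C_c^\infty$ and says this suffices. To actually get $\int\Gamma\,[-\Delta+(-\Delta)^s]\phi\,dx=\phi(0)$, one must apply that identity to $[-\Delta+(-\Delta)^s]\phi$, which is not compactly supported. That step needs exactly the bound $\int\Gamma(1+|x|)^{-n-2s}\,dx<\infty$ that you isolate. Your semigroup argument integrates $-\partial_t\int H\phi$ over $[\varepsilon,T]$, uses the approximate identity as $\varepsilon\to0$ and the $T^{-n/(2s)}$ decay as $T\to\infty$. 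It gives the distributional identity directly in the form used later in Proposition~\ref{equiv-ineq}. Your normalization $\int H(\cdot,t)\,dx=1$ also makes explicit the factor $(2\pi)^{-n}$ that the paper's definition of $H$ omits.

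In short, the paper's route is shorter and yields the Fourier symbol of $\Gamma$ for free. Yours is closer to the notion of distributional (super)solution that the rest of the paper relies on.
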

\begin{proof}
Step 1. We collect some useful estimates.

We denote 
$$p^{(2)}(x,t)=\frac1{(4\pi t)^{\frac n2}}\exp(-\frac{|x|^2}{4t})$$
and $p^{(2s)}(x,t)$ are the heat kernels associated with $-\Delta$ and $(-\Delta)^s$, respectively. We also define
\begin{align*}
\tilde{p}^{(2)}(x,t)=\frac1{(4\pi t)^{\frac n2}}\exp(-\frac{|x|^2}{16t}),\quad\hat{p}^{(2)}(x,t)=\frac1{(4\pi t)^{\frac n2}}\exp(-\frac{|x|^2}{t}),
\end{align*}
and
\begin{align*}
q_1(x,t)&=\begin{cases}
\hat{p}^{(2)}(x,t),\quad\text{for }|x|^2<t<|x|^{2s}\leq 1,\\
\max\{\hat{p}^{(2)}(x,t),{p}^{(2s)}(x,t)\},\quad\text{for }t<|x|^{2}\leq 1,\\
{p}^{(2)}(x,t),\quad\text{for }|x|^{2s}\leq t\leq 1,\\
{p}^{(2s)}(x,t),\quad\text{for } t\geq 1\text{ or }|x|\geq 1,
\end{cases}\\
q_2(x,t)&=\begin{cases}
p^{(2)}(x,t),\quad\text{for }|x|^2<t<|x|^{2s}\leq 1,\\
\max\{\tilde{p}^{(2)}(x,t),{p}^{(2s)}(x,t)\},\quad\text{for }t<|x|^{2}\leq 1,\\
{p}^{(2)}(x,t),\quad\text{for }|x|^{2s}\leq t\leq 1,\\
{p}^{(2s)}(x,t),\quad\text{for } t\geq 1\text{ or }|x|\geq 1.
\end{cases}
\end{align*}
Theorem 2.13 in \cite{SV2007} shows that there exists a positive constant $C_1$ such that for any $(x,t)\in\mathbb R^n\times (0,+\infty)$,
\begin{equation}\label{mix-kernel-estimate}
 \frac{1}{C_1}q_1(x,t)\leq H(x,t)\le C_1 q_2(x,t).
\end{equation}
Note that by Blumenthal and Getoor \cite{BG1960}, there exists a positive constant $C_2$ such that for any $(x,t)\in\mathbb R^n\times(0,+\infty)$,
\begin{equation}\label{s-heat kernel-estimate}
\frac1{C_2}\min\{t^{-\frac n{2s}},t|x|^{-n-2s}\}\leq p^{(2s)}(x,t)\leq C_2\min\{t^{-\frac n{2s}},t|x|^{-n-2s}\}.
\end{equation}
We observe that 
\begin{equation}\label{min-cases}
\min\{t^{-\frac n{2s}},t|x|^{-n-2s}\}=
\begin{cases}
t|x|^{-n-2s},\quad\text{if }|x|^{2s}>t,\\
t^{-\frac n{2s}},\quad\text{if }|x|^{2s}\leq t.
\end{cases}
\end{equation}

Step 2. We prove \eqref{mix-estimate-zero}.

Assume that $0<|x|\leq 1$, then
\begin{align*}
\Gamma(x)&\leq C_1\int^\infty_0q_2(x,t)dt\\
&= C _1\left(\int^{|x|^2}_0 \max\{\tilde{p}^{(2)}(x,t),{p}^{(2s)}(x,t)\}dt+\int^{1}_{|x|^{2}}{p}^{(2)}(x,t)dt+\int^{\infty}_1 p^{(2s)}(x,t)dt\right)
\end{align*}
First, note that $t<|x|^2$ implies that $t<|x|^{2s}$, hence by \eqref{s-heat kernel-estimate}-\eqref{min-cases}, we have
$$
 \int^{|x|^2}_0 {p}^{(2s)}(x,t)dt\leq C \int^{|x|^2}_0 t|x|^{-n-2s}dt=C|x|^{4-n-2s}.
$$
Next, by the variable substitution $z=|x|^2/t$, we obtain
\begin{align*}
 \int^{|x|^2}_0 \tilde{p}^{(2)}(x,t)dt= C\int^{|x|^2}_0 t^{-\frac n2}e^{-\frac{|x|^2}{16t}} dt=C\int^{\infty}_1|x|^{2-n}z^{\frac n2-2}e^{-\frac z{16}}dz=C|x|^{2-n},
\end{align*}
and
\begin{align*}
 \int^{1}_{|x|^{2}}{p}^{(2)}(x,t)dt= C \int^{1}_{|x|^{2}}t^{-\frac n2}e^{-\frac{|x|^2}{4t}} dt = C\int^{1}_{|x|^2}|x|^{2-n}z^{\frac n2-2}e^{-\frac z{4}}dz\leq C |x|^{2-n}.
\end{align*}
In addition, \eqref{s-heat kernel-estimate} and \eqref{min-cases} yield that
$$
\int^{\infty}_1 p^{(2s)}(x,t)dt \leq C\int^{\infty}_1 t^{-\frac n{2s}}dt\leq C.
$$
Combining the above facts, we get $\Gamma(x)\leq C|x|^{2-n}$ for all $0<|x|\leq 1$. The argument for proving $\Gamma(x)\geq C'|x|^{2-n}$ for all $0<|x|\leq 1$ is analogous.

Step 3. We prove \eqref{mix-estimate-infty}.

Assume that $|x|\geq 1$, then by \eqref{min-cases}, we have
\begin{align*}
\Gamma(x)&\leq C_1\int^\infty_0q_2(x,t)dt= C_1\int^\infty_0p^{(2s)}(x,t)dt\leq C\left(\int_0^{|x|^{2s}}t|x|^{-n-2s}dt+\int^{\infty}_{|x|^{2s}}t^{-\frac n{2s}}dt\right)\\
&\leq C|x|^{2s-n}.
\end{align*}
Similar discussion yields $\Gamma(x)\geq C'|x|^{2s-n}$ for all $|x|\geq 1$.

Now, combining \eqref{mix-estimate-zero} and \eqref{mix-estimate-infty},  $\Gamma$ is well defined in $\mathbb R^n\backslash\{0\}$. Then, by Theorem 3.1 in \cite{DSVZ2025}, we know that $H$ is nonnegative, radially symmetric, and nonincreasing with respect to  $r=|x|$, hence so is $\Gamma$.

Step 4. We verify  that $\Gamma$ is a fundamental solution of $-\Delta +(-\Delta)^s.$
Taking the Fourier transform, equation \eqref{fundame-equ} becomes equivalent to $$
\mathcal F(\Gamma)(\xi)=\frac{1}{|\xi|^2+|\xi|^{2s}},
$$  where $\mathcal F(\cdot)$ denotes the Fourier transform. 
Hence, it suffices to show that
for any $\phi\in C^\infty_c(\mathbb{R}^n)$, it holds
\begin{equation}\label{equ-dis-test}
\langle \mathcal{F}^{-1}\left(\frac{1}{|\xi|^2+|\xi|^{2s}}\right),\phi\rangle= \langle\Gamma,\phi\rangle.
\end{equation}

First, by \eqref{mix-estimate-zero}, we have $\Gamma\phi\in L^1(\mathbb R^n)$. Next, for a fixed $t>0$, we observe that $A(x,\xi):=e^{-t(|\xi|^2+|\xi|^{2s})+ix\cdot\xi}\phi(x)\in L^1(\mathbb R^n\times\mathbb R^n)$. Moreover, since $\mathcal{F}^{-1}(\phi)$ is a Schwartz function, then $|\mathcal{F}^{-1}(\phi)(\xi)|(1+|\xi|)^{N}$ is bounded for any $N\in\mathbb N$ and thus  $\mathcal{F}^{-1}(\phi)(\xi)\frac{1}{|\xi|^2+|\xi|^{2s}}\in L^1(\mathbb R^n)$. With these facts, employing \eqref{mix-heat kernek} and applying Fubini's theorem repeatedly, we derive
\begin{align*}
\langle\Gamma,\phi\rangle&=\int_{\mathbb R^n}\int^{\infty}_{0}H(x,t)\phi(x) dtdx\\
&=\int^{\infty}_{0}\int_{\mathbb R^n}H(x,t)\phi(x) dxdt\\
&=\int^{\infty}_{0}\int_{\mathbb R^n}\int_{\mathbb R^n}e^{-t(|\xi|^2+|\xi|^{2s})+ix\cdot\xi}\phi(x)d\xi dx dt\\
&=\int^{\infty}_{0}\int_{\mathbb R^n}e^{-t(|\xi|^2+|\xi|^{2s})}\left(\int_{\mathbb R^n}e^{ix\cdot\xi}\phi(x)dx\right) d\xi  dt\\
&=\int^{\infty}_{0}\int_{\mathbb R^n}e^{-t(|\xi|^2+|\xi|^{2s})}\mathcal{F}^{-1}(\phi)(\xi) d\xi  dt\\
&=\int_{\mathbb R^n}\mathcal{F}^{-1}(\phi)(\xi)\left(\int^{\infty}_{0}e^{-t(|\xi|^2+|\xi|^{2s})} dt\right)d\xi\\
&=\int_{\mathbb R^n}\mathcal{F}^{-1}(\phi)(\xi)\frac{1}{|\xi|^2+|\xi|^{2s}}d\xi=\langle \mathcal{F}^{-1}\left(\frac{1}{|\xi|^2+|\xi|^{2s}}\right),\phi\rangle.
\end{align*}
Therefore, we obtain \eqref{equ-dis-test} and  complete the proof.
\end{proof}
\subsection{A maximum principle in the distribution sense}
Next, we prove the following maximum principle for the mixed local-nonlocal operator $-\Delta +(-\Delta)^s $ in $\mathbb R^n$ in the distribution sense. For the maximum principles (for both weak and classical solutions), see \cite{BDVV2022}. 

\begin{lemma}\label{maximum principle}
Let $u\in \mathcal{L}^{2s}(\mathbb R^n)$ satisfy
\begin{equation}\label{mix-superharmonic}
-\Delta u+(-\Delta)^s u\geq 0\quad \text{in }\mathbb R^n
\end{equation}
in the distribution sense, i.e., for any $\phi\in C^{\infty}_c(\mathbb R^n)$ with $\phi\geq 0$,
$$
\int_{\mathbb R^n}u[(-\Delta)+(-\Delta)^s]\phi dx\geq 0.
$$
Assume that 
\begin{equation}\label{liminf-geq0}
\liminf_{|x|\rightarrow\infty}u(x)\geq 0.
\end{equation}
Then, $u\geq 0$ a.e. in $\mathbb R^n$.
\end{lemma}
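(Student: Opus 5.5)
The plan is to mollify and reduce to a classical maximum principle for smooth functions. Let $\rho_\varepsilon$ be a standard nonnegative radial mollifier and set $u_\varepsilon=u*\rho_\varepsilon$. Since $u\in\mathcal L^{2s}(\mathbb R^n)$, the function $u_\varepsilon$ is smooth and again in $\mathcal L^{2s}(\mathbb R^n)$. This holds because $1+|x|^{n+2s}$ and $1+|x-y|^{n+2s}$ are comparable for $|y|\le\varepsilon\le1$. Hence $(-\Delta)^s u_\varepsilon$ is defined pointwise. For fixed $x$, test the distributional inequality with $\phi=\rho_\varepsilon(x-\cdot)\ge0$. Because the operator is translation invariant and commutes with convolution, this gives
$$
-\Delta u_\varepsilon(x)+(-\Delta)^s u_\varepsilon(x)=\int_{\mathbb R^n}u(y)\,[(-\Delta)+(-\Delta)^s]\rho_\varepsilon(x-y)\,dy\ge 0
$$
pointwise. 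To justify the middle equality, one writes $(-\Delta)^s u_\varepsilon(x)$ through the singular integral and moves the convolution inside. This uses Fubini together with the decay $|(-\Delta)^s\rho_\varepsilon(z)|\le C_\varepsilon/(1+|z|^{n+2s})$ and the $\mathcal L^{2s}$ integrability.

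The next step is to transfer the hypothesis at infinity, and this is where I expect the main obstacle. Hypothesis \eqref{liminf-geq0} is stated pointwise for an $L^1_{loc}$ function; I read it as an essential liminf. With that reading, for every $\eta>0$ there is $R_\eta$ such that $u\ge-\eta$ a.e. on $\{|x|>R_\eta\}$. Consequently $u_\varepsilon\ge-\eta$ on $\{|x|>R_\eta+1\}$ for all $\varepsilon\le1$, so $\liminf_{|x|\to\infty}u_\varepsilon\ge0$ uniformly in $\varepsilon$.

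Now apply a classical minimum-principle argument to $v=u_\varepsilon$. Suppose $\inf v<0$. Then $v$ attains a negative global minimum at some point $x_0$, since $v$ is continuous and $\liminf_{|x|\to\infty} v\ge0$. At $x_0$ we have $-\Delta v(x_0)\le0$. Moreover
$$
(-\Delta)^s v(x_0)=C_{n,s}\,PV\!\int\frac{v(x_0)-v(y)}{|x_0-y|^{n+2s}}dy\le0,
$$
with strict inequality unless $v\equiv v(x_0)$. The constant case is impossible, because $v(x_0)<0$ while $\liminf_{|x|\to\infty}v\ge0$. Hence $v$ is not constant, the integrand is $\le0$ and strictly negative on a set of positive measure, and so $(-\Delta)^s v(x_0)<0$. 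Therefore $-\Delta v(x_0)+(-\Delta)^s v(x_0)<0$, contradicting the pointwise inequality from the first step. So $u_\varepsilon\ge0$ everywhere.

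Finally, let $\varepsilon\to0$. Since $u_\varepsilon\to u$ in $L^1_{loc}$, and hence a.e. along a subsequence, we conclude $u\ge0$ a.e. Beyond the interpretation issue above, the remaining technical point is the justification of the commutation and Fubini in the first step. As a fallback, one can avoid pointwise evaluation of $(-\Delta)^s u_\varepsilon$ at $x_0$: the identity can be checked in distributions and then upgraded to a pointwise one using the smoothness of $u_\varepsilon$ and the $\mathcal L^{2s}$ tail bound, which gives continuity of $(-\Delta)^s u_\varepsilon$.
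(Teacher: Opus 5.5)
Your proposal is correct and follows the paper's proof essentially step for step: mollify, check $u_\varepsilon\in\mathcal L^{2s}$, obtain the pointwise inequality $[-\Delta+(-\Delta)^s]u_\varepsilon\ge 0$, transfer the (essential) liminf condition to $u_\varepsilon$, run the negative-minimum argument, and let $\varepsilon\to0$. The differences are cosmetic. You get the pointwise inequality by testing directly with $\rho_\varepsilon(x-\cdot)$, whereas the paper uses the duality identity $((-\Delta)^s\phi)_\varepsilon=(-\Delta)^s\phi_\varepsilon$ with an arbitrary test function. You also spell out why $(-\Delta)^s u_\varepsilon(x_0)<0$ is strict, which the paper leaves implicit.
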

\begin{proof}
 We use the smooth approximation. Fix  $\eta\in C^\infty_c(B_1)$ is radial about 0 satisfying  $\eta\geq 0$ and $\int_{\mathbb R^n}\eta dx=1$. 
 For $\varepsilon\in (0,1)$ and a function $v$, we denote $v_{\varepsilon}(x)=\int_{\mathbb R^n}v(x-y)\varepsilon^{-n}\eta(\varepsilon^{-1}y)dy$. 
 
 Now, We claim that $u_\varepsilon\in C^{\infty}(\mathbb R^n) $ satisfies the following properties: 
 
 $\mathrm{(i)}$ $u_\varepsilon\in \mathcal{L}^{2s}(\mathbb R^n)$. Note that for any $y\in B_{\varepsilon}$ and $x\in\mathbb R^n$, we have
     $$
\frac{1+|x-y|^{n+2s}}{1+|x|^{n+2s}}\leq C,
     $$
hence
\begin{align*}
\int_{\mathbb R^n}\frac{|u_\varepsilon(x)|}{1+|x|^{n+2s}}dx&=\int_{\mathbb R^n}\int_{\mathbb R^n}\frac{|u(x-y)|}{1+|x|^{n+2s}}dx\varepsilon^{-n}\eta(\varepsilon^{-1}y)dy\\
&\leq C\int_{\mathbb R^n}\int_{\mathbb R^n}\frac{|u(x-y)|}{1+|x-y|^{n+2s}}dx\varepsilon^{-n}\eta(\varepsilon^{-1}y)dy\\&=C\int_{\mathbb R^n}\frac{|u(x)|}{1+|x|^{n+2s}}dx<\infty.
\end{align*}

$\mathrm{(ii)}$ $-\Delta u_{\varepsilon}+(-\Delta)^s u_{\varepsilon}\geq 0$ in $\mathbb R^n$ in the classical sense. Take any $\phi\in C^{\infty}_c(\mathbb R^n)$ with $\phi\geq 0$. Note that 
$$
(-\Delta)^s\phi(x)=\frac{C_{n,s}}{2}\int_{\mathbb R^n}\frac{2\phi(x)-\phi(x+y)-\phi(x-y)}{|y|^{n+2s}}dy,
$$
we easily verify that $((-\Delta)^s\phi)_\varepsilon=(-\Delta)^s\phi_\varepsilon$. We also have $(-\Delta\phi)_\varepsilon=-\Delta\phi_\varepsilon$. Hence, 
\begin{align*}
\int_{\mathbb R^n}[-\Delta+(-\Delta)^s]u_\varepsilon\phi dx&=\int_{\mathbb R^n}u_\varepsilon[-\Delta+(-\Delta)^s]\phi dx=\int_{\mathbb R^n}u([-\Delta+(-\Delta)^s]\phi)_\varepsilon dx\\&=\int_{\mathbb R^n}u[-\Delta+(-\Delta)^s]\phi_\varepsilon dx\geq 0,
\end{align*}
where the last inequality follows from \eqref{mix-superharmonic}.
By the arbitrariness of $\phi$, we obtain $-\Delta u_{\varepsilon}+(-\Delta)^s u_{\varepsilon}\geq 0$ pointwise in $\mathbb R^n$.

$\mathrm{(iii)}$ $\liminf_{|x|\rightarrow\infty}u_\varepsilon(x)\geq 0.$ By \eqref{liminf-geq0}, for any $\delta>0$, there exists some $R>0$ such that $u(x)\geq -\delta$ for a.e. $x\in B_R^c$. Note that for any $x\in B_{R+\varepsilon}^c$ and $y\in B_\varepsilon$,  we have $|x-y|\geq R$, hence
$$
u_{\varepsilon}(x)=\int_{B_\varepsilon}u(x-y)\varepsilon^{-n}\eta(\varepsilon^{-1}y)dy\geq -\delta\int_{B_\varepsilon}\varepsilon^{-n}\eta(\varepsilon^{-1}y)dy=-\delta.
$$
Therefore, we obtain that $\liminf_{|x|\rightarrow\infty}u_\varepsilon(x)\geq 0.$

 From the above properties of $u_{\varepsilon}$, we have $u_\varepsilon\geq 0$ in $\mathbb R^n$. Otherwise, $\inf_{\mathbb R^n}u_\varepsilon<0$. By $\mathrm{(iii)}$, there exists an $x_0\in\mathbb R^n$ such that $u_\varepsilon(x_0)=\min_{\mathbb R^n}u_{\varepsilon}<0$. However, we have
$$
[-\Delta +(-\Delta)^s]u_\varepsilon(x_0)\leq (-\Delta)^s u_\varepsilon(x_0)=\int_{\mathbb R^n}\frac{ u_\varepsilon(x_0)- u_\varepsilon(y)}{|x_0-y|^{n+2s}}dy<0,
$$
which contradicts $\mathrm{(ii)}$. Therefore, we obtain $u_\varepsilon\geq 0$ in $\mathbb R^n$. Let $\varepsilon\rightarrow 0$, we get  $u\geq 0$ a.e. in $\mathbb R^n$.
\end{proof}
\subsection{An equivalent integral inequality for supersolutions 
}
Based on the fundamental solution and the maximum principle above, we establish an equivalent integral inequality for  supersolutions of \eqref{lane-emden}.
\begin{prop}\label{equiv-ineq}
Let $u$ be a nonnegative distributional supersolution of \eqref{lane-emden}. Then
\begin{equation}\label{integral-equ}
u(x)\geq \int_{\mathbb R^n}\Gamma(x-y)u^q(y)dy\quad\text{a.e. in }\mathbb R^n.
\end{equation}
\end{prop}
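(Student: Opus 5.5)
The plan is to truncate the source, compare $u$ with the Newtonian-type potential of the truncated source via Lemma~\ref{maximum principle}, and then let the truncation grow by monotone convergence. For $R>0$ we set
$$
w_R(x):=\int_{B_R}\Gamma(x-y)u^q(y)\,dy .
$$
Since $u\in L^q_{loc}(\mathbb R^n)$, the function $f_R:=u^q\chi_{B_R}$ lies in $L^1(\mathbb R^n)$ and has compact support.

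\textbf{Step 1: $w_R$ is an admissible potential.} From the bounds \eqref{mix-estimate-zero}--\eqref{mix-estimate-infty}, $\Gamma$ is locally integrable and decays like $|x|^{2s-n}$ at infinity. Hence $w_R\in L^1_{loc}$, and since $\Gamma\le C|x|^{2s-n}$ for $|x|\ge 1$ and $f_R$ has compact support, $w_R(x)\le C\|f_R\|_{L^1}|x|^{2s-n}$ for $|x|\ge 2R+1$. In particular $w_R\in\mathcal L^{2s}(\mathbb R^n)$ and $w_R\to0$ as $|x|\to\infty$.

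\textbf{Step 2: $w_R$ solves the equation with source $f_R$.} We claim that
$$
\int w_R[-\Delta+(-\Delta)^s]\phi\,dx=\int f_R\phi\,dx \qquad\text{for all }\phi\in C^\infty_c(\mathbb R^n).
$$
By Fubini, the left side equals $\int f_R(y)\int\Gamma(x-y)[-\Delta+(-\Delta)^s]\phi(x)\,dx\,dy$, and by \eqref{fundame-equ} together with translation invariance, the inner integral is $\phi(y)$. To justify Fubini we need $\int\!\!\int f_R(y)\Gamma(x-y)|[-\Delta+(-\Delta)^s]\phi(x)|\,dx\,dy<\infty$. This follows from $|[-\Delta+(-\Delta)^s]\phi(x)|\le C(1+|x|)^{-n-2s}$ and $\Gamma\in L^1_{loc}$ with decay $|x|^{2s-n}$: the function $\Gamma*(1+|\cdot|)^{-n-2s}$ is locally bounded, so it is bounded on the compact support of $f_R$.

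\textbf{Step 3: comparison.} Let $v:=u-w_R\in\mathcal L^{2s}(\mathbb R^n)$. For every $\phi\ge0$,
$$
\int v[-\Delta+(-\Delta)^s]\phi\,dx\ \ge\ \int u^q\phi\,dx-\int u^q\chi_{B_R}\phi\,dx\ \ge\ 0 ,
$$
so $v$ is a distributional supersolution of the homogeneous equation. Moreover $\liminf_{|x|\to\infty}v\ge \liminf(-w_R)=0$ because $u\ge0$. Lemma~\ref{maximum principle} then gives $u\ge w_R$ a.e. The liminf condition has to be read in the essential sense for an $L^1_{loc}$ function, but that is exactly how it is used in the proof of the lemma.

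\textbf{Step 4: removing the truncation.} Letting $R\to\infty$ along the integers, monotone convergence gives $w_R\uparrow\int\Gamma(x-y)u^q(y)\,dy$ pointwise. Because the exceptional null sets are countably many, \eqref{integral-equ} holds a.e.

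I expect the main obstacle to be the rigorous Fubini justification in Step~2. There one uses \eqref{fundame-equ} in its distributional form, $\int\Gamma[-\Delta+(-\Delta)^s]\phi=\phi(0)$, which needs $\Gamma\in\mathcal L^{2s}$; that membership follows from the decay estimate \eqref{mix-estimate-infty}, since $2s-n-(n+2s)<-n$. It also needs $\Gamma*(1+|\cdot|)^{-n-2s}$ to be locally bounded, which follows from splitting into $|x-y|<1$ and $|x-y|\ge1$ and using the two estimates of Proposition~\ref{Gamma-estimat}.
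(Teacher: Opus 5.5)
Your proposal is correct and follows the paper's proof essentially step for step. Like the paper, you truncate the source to $B_R$, compare $u$ with $w_R=\Gamma*(u^q\chi_{B_R})$ via the distributional maximum principle (Lemma~\ref{maximum principle}), and let $R\to\infty$ by monotone convergence; beyond that, your Fubini justification in Step~2, the essential-liminf remark, and the countable-null-set bookkeeping just fill in details the paper calls ``easily checked.''
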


\begin{proof}
For $R>0$, denote 
$$
u_R=  
\begin{cases}
u\quad\text{in }  B_R,\\
0\quad\text{in }  \mathbb R^n\backslash B_R.
\end{cases}
$$
Let 
$$
w_R(x)=\int_{\mathbb R^n} \Gamma(x-y)u^q_R(y)dy.
$$
Then by Proposition \ref{Gamma-estimat}, one can easily check that $w_R\in\mathcal{L}^{2s}(\mathbb{R}^n)$ is a distributional solution to
$$
[-\Delta +(-\Delta)^s]w_R=u_R^q\quad\text{in }\mathbb R^n.
$$
with the asymptotic behavior\begin{equation}\label{wR-decay}
w_R(x)\rightarrow 0,\ \mbox{as}\ |x|\rightarrow\infty.
\end{equation}
Set $v_R=u-w_R$, then $v_R$ is a distributional solution of 
\begin{equation}\label{vR-equation}
[-\Delta +(-\Delta)^s]v_R=u^q-u_R^q\geq 0\quad\text{in }\mathbb R^n,
\end{equation}
with $\liminf_{|x|\rightarrow\infty}v_R(x)\geq 0$.
Applying Lemma \ref{maximum principle} to $v_R$, we have $v_R\geq 0$ a.e. in $\mathbb R^n$. 
Letting $R\rightarrow+\infty$ and applying the monotone convergence theorem, we obtain \eqref{integral-equ}.
\end{proof}
\begin{remark}
As a consequence of Proposition \ref{equiv-ineq}, if $u$ is a nonnegative distributional supersolution of \eqref{lane-emden} that vanishes in a set with positive measure, then $u=0$ a.e. in $\mathbb{R}^n$.
\end{remark}

\subsection{The fractional Laplacian of \eqref{certain function}}
To construct a supersolution in the supercritical case, we need the fractional Laplacian of a certain function. Specifically, for some constant $\sigma>0$, we define 
\begin{equation}\label{special function}
f(x):=(1+|x|^2)^{\frac{2s-n+\sigma}{2}},\quad\text{for }x\in\mathbb R^n.
\end{equation}
For $q>\frac{n}{n-2s}$, we can choose $\sigma>0$ small so that
\begin{align}
&\sigma+q(n-2s-\sigma)>n,\label{sigma-choose-1}\\
&2(\sigma+s)\leq n.\label{sigma-choose-2}
\end{align}

By Lemmas 4.2-4.3 in Liu \cite{Liu2025}, we have the following useful lemma.
\begin{lemma}\label{special function-estimate}
Let $f$ be defined by \eqref{special function} with  $q>\frac{n}{n-2s}$ and \eqref{sigma-choose-1}-\eqref{sigma-choose-2}. Then $(-\Delta)^s f$ is a continuous positive function in $\mathbb R^n$. Moreover, for any $r>0$, let $g_r(z)$ be defined by 
\begin{equation}\label{gr-defin}
g_r(z):=\left(\frac1{r^2}+|z|^2\right)^{\frac{2s-n+\sigma}{2}},\quad\text{for }z\in\mathbb R^n. 
\end{equation}
Then 
\begin{equation}
    \lim_{r\rightarrow\infty}(-\Delta)^s g_r(e_1)=B,
\end{equation}
where $e_1=(1,0,\dots,0)\in\mathbb R^n$ and $B$ is a positive constant.
\end{lemma}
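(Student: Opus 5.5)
The plan is to prove both assertions from a single explicit formula for $(-\Delta)^s$ of the Cauchy-type profile $f$, and then use scaling to read off the limit of $g_r$. Set $\beta:=n-2s-\sigma$. By \eqref{sigma-choose-2} we have $\beta\geq\sigma>0$, and clearly $\beta<n-2s$, so $f(x)=(1+|x|^2)^{-\beta/2}$.

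\textbf{Continuity.} $f$ is smooth, bounded, with bounded derivatives, and $f(x)\le C|x|^{-\beta}$, so $f\in\mathcal L^{2s}(\mathbb R^n)$. Writing $(-\Delta)^s f$ with second differences, the integrand is dominated by $C\min\{|y|^2,1\}|y|^{-n-2s}$ uniformly in $x$. Dominated convergence then shows that $(-\Delta)^s f$ is a continuous function.

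\textbf{Positivity.} I would invoke (or re-derive by Fourier transform, using that $\widehat f$ is an explicit Bessel-$K$ profile) the classical formula of Dyda:
\begin{equation*}
(-\Delta)^{s}(1+|x|^2)^{-\beta/2}=c_{n,s,\beta}\,{}_2F_1\Big(\tfrac{\beta}{2}+s,\tfrac n2+s;\tfrac n2;-|x|^2\Big),\qquad c_{n,s,\beta}=\frac{2^{2s}\Gamma(\frac\beta2+s)\Gamma(\frac n2+s)}{\Gamma(\frac\beta2)\Gamma(\frac n2)}>0 .
\end{equation*}
Next apply the Pfaff transformation with $w=|x|^2/(1+|x|^2)\in[0,1)$:
\begin{equation*}
(-\Delta)^s f(x)=c_{n,s,\beta}(1+|x|^2)^{-\frac\beta2-s}\,{}_2F_1\Big(\tfrac\beta2+s,-s;\tfrac n2;w\Big).
\end{equation*}
The parameters satisfy $a=\frac\beta2+s>0$, $b=-s<0$ and $c=\frac n2>0$. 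Hence every coefficient of the power series beyond the constant term $1$ is negative, and $w\mapsto{}_2F_1(a,-s;c;w)$ is decreasing on $[0,1)$.

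Moreover $c-a-b=\frac\sigma2+s>0$, so by Gauss's theorem the series converges at $w=1$ with value
\begin{equation*}
\frac{\Gamma(\frac n2)\Gamma(\frac\sigma2+s)}{\Gamma(\frac\sigma2)\Gamma(\frac n2+s)}>0 .
\end{equation*}
Since the function decreases from $1$ to this positive value, it stays positive on $[0,1)$, which gives $(-\Delta)^s f>0$ everywhere.

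\textbf{The limit.} Since $g_r(z)=r^{\beta}f(rz)$, the scaling of $(-\Delta)^s$ gives
\begin{equation*}
(-\Delta)^s g_r(e_1)=r^{\beta+2s}\,(-\Delta)^sf(re_1).
\end{equation*}
By the formula above this equals
\begin{equation*}
c_{n,s,\beta}\Big(\frac{r^2}{1+r^2}\Big)^{\frac\beta2+s}{}_2F_1\Big(\tfrac\beta2+s,-s;\tfrac n2;\tfrac{r^2}{1+r^2}\Big).
\end{equation*}
By Abel's theorem (the series converges at $w=1$) this tends, as $r\to\infty$, to
\begin{equation*}
B=c_{n,s,\beta}\,\frac{\Gamma(\frac n2)\Gamma(\frac\sigma2+s)}{\Gamma(\frac\sigma2)\Gamma(\frac n2+s)}>0 .
\end{equation*}

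\textbf{Cross-check of the limit.} As an independent check one can argue directly. The functions $g_r$ increase to $|z|^{-\beta}$ and are uniformly $C^2$ on $B_{1/2}(e_1)$. The far part of the integrand is dominated by $(1+|y|^{-\beta})|e_1-y|^{-n-2s}$, which is integrable because $\beta<n$. Dominated convergence therefore gives $B=(-\Delta)^s|z|^{-\beta}\big|_{z=e_1}$. The Riesz formula for this quantity has the constant $2^{2s}\Gamma(\frac{n-\beta}2)\Gamma(\frac{\beta+2s}2)/(\Gamma(\frac{n-\beta-2s}2)\Gamma(\frac\beta2))$, which is positive precisely because $0<\beta<n-2s$.

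\textbf{Main obstacle.} The delicate step is global positivity, not just positivity at infinity. Near the origin $f$ is a smooth bump, and the fractional Laplacian of a bump typically changes sign; what prevents this here is the slow decay $|x|^{-\beta}$ with $\beta<n-2s$. If one prefers not to quote Dyda's formula, I would derive it via Bochner subordination, $f=\frac1{\Gamma(\beta/2)}\int_0^\infty t^{\beta/2-1}e^{-t}e^{-t|x|^2}\,dt$, combined with the known Fourier computation for Gaussians. This reproduces the hypergeometric expression, and the sign analysis is then the monotonicity-in-$w$ argument above.
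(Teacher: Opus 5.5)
Your proof is correct, but it follows a genuinely different route from the paper. The paper gives no argument of its own here: it simply imports Lemmas 4.2--4.3 of Liu, which are formulated for the nonlinear fractional $p$-Laplacian $(-\Delta_p)^s$.

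\textbf{What you do instead.} You exploit linearity (the case $p=2$). Dyda's closed form for $(-\Delta)^s(1+|x|^2)^{-\beta/2}$, followed by Pfaff's transformation, reduces everything to the single function $F(w)={}_2F_1(\frac\beta2+s,-s;\frac n2;w)$ on $[0,1)$. All Taylor coefficients of $F$ past the constant term are negative, so
\[
F(w)>F(1)=\frac{\Gamma(\frac n2)\Gamma(\frac\sigma2+s)}{\Gamma(\frac\sigma2)\Gamma(\frac n2+s)}>0 .
\]

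\textbf{Checks.} The parameters are right: $c-b=-s$, $c-a=\frac\sigma2$, $c-a-b=\frac\sigma2+s$. The scaling identity $(-\Delta)^s g_r(e_1)=r^{\beta+2s}(-\Delta)^s f(re_1)$ matches the paper's identity \eqref{s-delta-f}. Your value
\[
B=\frac{2^{2s}\Gamma(\frac\beta2+s)\Gamma(\frac\sigma2+s)}{\Gamma(\frac\beta2)\Gamma(\frac\sigma2)}
\]
agrees exactly with the Riesz constant in your cross-check, so the two computations are consistent.

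\textbf{Comparison.} Your route has three advantages over the citation.
\begin{enumerate}
\item It gives an explicit value of $B$.
\item It explains transparently why positivity is global rather than only near infinity. The hypothesis $\sigma>0$, i.e.\ $\beta<n-2s$, is exactly what makes $F(1)=B/c_{n,s,\beta}>0$, and \eqref{sigma-choose-2} is needed only to guarantee $\beta>0$.
\item It is self-contained modulo classical special-function identities.
\end{enumerate}
The paper's citation, in turn, buys validity for $(-\Delta_p)^s$, where no such closed forms exist.

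Your dominated-convergence cross-check ($g_r\uparrow|z|^{-\beta}$, then evaluating the homogeneous profile at $e_1$) is by itself a complete, formula-light proof of the limit statement. Only the global positivity genuinely needs the hypergeometric computation or some substitute for it.
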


\section{Proof of Theorem \ref{main-theorem-1}}\label{proof-thm-main-1}
In this section, we prove Theorem \ref{main-theorem-1}.
\begin{proof}[Proof of Theorem \ref{main-theorem-1}]
Case 1: $1<q< \frac n{n-2s}$. 

We present two different proofs.

\textbf{Proof by test functions:} Let $u$ be a positive supersolution of \eqref{lane-emden} and $\eta\in C^\infty_c(B_2)$ satisfy $0\leq \eta\leq 1$, $\eta|_{B_1}=1$. For any $R>0$, we set $\eta_{R}=\eta(\frac\cdot R)$  For some $m\in\mathbb Z_{+}$ to be determined, we use $\eta^m_{R}$ as a test function for \eqref{lane-emden}. Note that 
$$
|\nabla \eta_R|\leq \frac CR,\quad |\nabla^2\eta_R|\leq \frac C{R^2}.
$$
Hence, we have 
\begin{equation*}
\int_{\mathbb R^n}u(-\Delta\eta^m_R)dx=-\int_{\mathbb R^n}u(m\eta^{m-1}_R\Delta\eta_R+m(m-1)\eta^{m-2}_R|\nabla\eta_R|^2)dx\leq \frac{C}{R^2}\int_{\mathbb R^n}u\eta^{m-1}_Rdx.
\end{equation*}
Now we set $m=\lfloor\frac{q}{q-1}\rfloor+1$, then $m-1-\frac{m}{q}>0$. By H\"older inequality, we obtain that  
\begin{equation}\label{Delta-eta}
\begin{aligned}
\int_{\mathbb R^n}u(-\Delta\eta^m_R)dx&\leq \frac{C}{R^2}\int_{\mathbb R^n}u\eta^{\frac m q}_R\eta^{m-1-\frac{m}{q}}_Rdx\\
&\leq \frac{C}{R^2}\left(\int_{\mathbb R^n}u^q\eta^m_Rdx\right)^{\frac1q}\left(\int_{\mathbb R^n}\eta^{(m-1-\frac{m}{q})(\frac q{q-1})}_Rdx\right)^{1-\frac1q}\\
&\leq CR^{n(1-\frac1q)-2}\left(\int_{\mathbb R^n}u^q\eta^m_Rdx\right)^{\frac1q}.
\end{aligned}
\end{equation}
Next, by the convexity of $t\mapsto t^m$, 
we have $$(a^m-b^m)\leq ma^{m-1}(a-b),\ \mbox{for all}\ a,b\geq 0,$$ hence
\begin{equation*}
\begin{aligned}
\int_{\mathbb R^n}u(-\Delta)^s(\eta^m_R)dx&=\int_{\mathbb R^n}\int_{\mathbb R^n}u(x)\frac{\eta^m_R(x)-\eta^m_R(y)}{|x-y|^{n+2s}}dydx\\
&\leq \int_{\mathbb R^n}\int_{\mathbb R^n}u(x)\frac{m\eta^{m-1}_R(x)(\eta_R(x)-\eta_R(y))}{|x-y|^{n+2s}}dydx\\
&= \int_{\mathbb R^n}u(x)m\eta^{m-1}_R(x)\frac{1}{R^{2s}}\int_{\mathbb R^n}\frac{(\eta(\frac x R)-\eta(\frac y R))}{|\frac x R-\frac y R|^{n+2s}}\frac{dy}{R^n}dx\\
&= \frac{1}{R^{2s}}\int_{\mathbb R^n}u(x)m\eta^{m-1}_R(x)[(-\Delta)^s\eta]\left(\frac x R\right)dx.
\end{aligned}
\end{equation*}
Note that $supp(\eta_R)\subset B_{2R}$ and $|(-\Delta)^s\eta|\leq C$ in $B_2$ yield that
$$
\int_{\mathbb R^n}u(-\Delta)^s(\eta^m_R)dx\leq  \frac{C}{R^{2s}}\int_{\mathbb R^n}u\eta^{m-1}_Rdx.
$$
Following a discussion similar to \eqref{Delta-eta}, we obtain 
\begin{equation}\label{s-Delta-eta}
 \int_{\mathbb R^n}u(-\Delta)^s(\eta^m_R)dx\leq    CR^{n(1-\frac1q)-2s}\left(\int_{\mathbb R^n}u^q\eta^m_Rdx\right)^{\frac1q}. 
\end{equation}
On the other hand, by $u$ is a supersolution of \eqref{lane-emden}, we get
\begin{equation}\label{mix-super}
\int_{\mathbb R^n}u[(-\Delta)+(-\Delta)^s](\eta^m_R)dx \geq \int_{\mathbb R^n}u^p\eta^m_R dx.    
\end{equation}
Combining \eqref{Delta-eta}-\eqref{mix-super}, we have
\begin{equation}\label{decay-estimate-inte}
\left(\int_{\mathbb R^n}u^q\eta^m_Rdx\right)^{1-\frac1q}\leq C(R^{n(1-\frac1q)-2s}+R^{n(1-\frac1q)-2}).
\end{equation}
By $q<\frac n{n-2s}$, we have 
$$
n(1-\frac1q)-2<n(1-\frac1q)-2s<0.
$$
Hence, let $R\rightarrow+\infty$ in \eqref{decay-estimate-inte}, we get $u\equiv0$ in $\mathbb R^n$, which contradicts $u>0$ in $\mathbb R^n$.

\textbf{Proof by equivalent integral characterization:}  Let $u$ be a positive supersolution of \eqref{lane-emden}. By \eqref{integral-equ} and \eqref{mix-estimate-infty}, we have, for $|x|\geq 2$,
$$
u(x)\geq \int_{B_1}\Gamma(x-y)u^q(y)dy\geq\frac1{C}\int_{B_1}\frac1{|x-y|^{n-2s}}u^q(y)dy\geq \frac{C_1}{|x|^{n-2s}}. 
$$
for some constant $C_1>0$.
Hence, for $|x|\geq 4$, 
\begin{align*}
u(x)&\geq \int_{B_{\frac{|x|}{2}}(x)\backslash B_{\frac{|x|}{4}}(x)}\Gamma(x-y)u^q(y)dy\geq \frac{1}{C}\int_{B_{\frac{|x|}{2}}(x)\backslash B_{\frac{|x|}{4}}(x)}\frac{1}{|x-y|^{n-2s}}\frac{C^q_1}{|y|^{q(n-2s)}}dy\\
&\geq\frac{C_2}{|x|^{q(n-2s)-2s}},\dots
\end{align*}
By induction, for $j=1,2,\dots$, when  $|x|\geq 2^j$, we obtain
\begin{equation}\label{decay-alphaj}
u(x)\geq \frac{C_j}{|x|^{\alpha_j}},
\end{equation}
where $C_j>0$ and
$$
\alpha_1=n-2s,\quad\alpha_j=q\alpha_{j-1}-2s~(j\geq 2).
$$
Note that $1<q<\frac n{n-2s}$ implies $n-2s+\frac{2s}{1-q}<0$, hence, for $j$ large, 
$$
\alpha_j=q^{j-1}\alpha_1-2s(1+q+q^2+\cdots +q^{j-1})=\left(n-2s+\frac{2s}{1-q}\right)q^{j-1}-\frac{2s}{1-q}<0.
$$
We choose a $j_0$ such that $\alpha_{j_0}<0$. Then, by \eqref{integral-equ} and \eqref{decay-alphaj}, we get, for $|x|\leq 2^{j_0-1}$,
$$
u(x)\geq \int_{\mathbb R^n\backslash B_{2^{j_0}}}\Gamma (x-y)u^q(y)dy\geq  \frac{1}{C}\int_{\mathbb R^n\backslash B_{2^{j_0}}}\frac1{|x-y|^{n-2s}}\frac{C^q_{j_0}}{|y|^{q\alpha_{j_0}}}dy=+\infty.
$$
Therefore we obtain a contradiction, which shows that \eqref{lane-emden} has no positive supersolution.

Case 2: $q=\frac n{n-2s}$.

Let $R\geq 1$. By \eqref{integral-equ}, \eqref{mix-estimate-infty}, and \eqref{mix-estimate-zero}, we have
\begin{equation}\label{u-inf-estimate}
\begin{aligned}
u(x)&\geq \int_{B_R}\Gamma (x-y)u^q(y)dy\\
&=\left(\int_{B_R\cap\{|x-y|\leq 1\}}+\int_{B_R\cap\{|x-y|\geq 1\}}\right)\Gamma (x-y)u^q(y)dy\\
&\geq\frac{1}{C}\left[\int_{B_R\cap\{|x-y|\leq 1\}}\frac{u^q(y)}{|x-y|^{n-2}}dy+\int_{B_R\cap\{|x-y|\geq 1\}}\frac{u^q(y)}{|x-y|^{n-2s}}dy\right] \\
&\geq \frac{1}{C}\left[\int_{B_R\cap\{|x-y|\leq 1\}}u^q(y)dy+\frac{1}{(|x|+R)^{n-2s}}\int_{B_R\cap\{|x-y|\geq 1\}}{u^q(y)}dy\right]\\
&\geq \frac1{C}\frac{1}{(|x|+R)^{n-2s}}\int_{B_R}u^q(y)dy.
\end{aligned}
\end{equation}
Hence, 
$$
\int_{B_R}u^q(x)dx\geq \frac{1}{C^q}\int_{B_R}\frac1{(|x|+R)^{n}}dx\left(\int_{B_R}u^q(y)dy\right)^q\geq c\left(\int_{B_R}u^q(y)dy\right)^q,
$$
where $c$ is independent of $R$. Let $R\rightarrow +\infty$, we get $u\in L^q(\mathbb R^n)$. Furthermore, by \eqref{u-inf-estimate}, we have
$$
\int_{B_{2R}\backslash B_R} u^q(x)dx\geq \frac{1}{C^q}\int_{B_{2R}\backslash B_R}\frac1{(|x|+R)^{n}}dx\left(\int_{B_R}u^q(y)dy\right)^q\geq c'\left(\int_{B_R}u^q(y)dy\right)^q,
$$
where $c'$ is independent of $R$. Let $R\rightarrow +\infty$, we obtain $\int_{\mathbb{R}^n}u^q(y)dy=0$ and hence $u\equiv0$. This contradicts the fact that $u>0$.

Case 3: $q>\frac n{n-2s}$.

We will construct a positive supersolution of \eqref{lane-emden}. Let $f$ be defined by \eqref{special function} with \eqref{sigma-choose-1}-\eqref{sigma-choose-2}. We first compute $(-\Delta)^s f$. Fix $x,y\in\mathbb R^n$ with $x\neq0$. We set $r=|x|$ and write $x=rA_x e_1$ for some rotation $A_x$ in $\mathbb R^n$. We also write $y=r A_x z$ for some $z\in\mathbb R^n$. Let $g_r$ be defined by \eqref{gr-defin}, we have
$$
f(x)-f(y)=r^{2s-n+\sigma}(g_r(e_1)-g_r(z)).
$$
Hence, we obtain
\begin{equation}\label{s-delta-f}
\begin{aligned}
(-\Delta)^s f(x)&=\mathrm{p.v.}\int_{\mathbb R^n}\frac{f(x)-f(y)}{|x-y|^{n+2s}}dy=  \mathrm{p.v.}\int_{\mathbb R^n}\frac{r^{2s-n+\sigma}(g_r(e_1)-g_r(z))}{r^{n+2s}|e_1-z|^{n+2s}}r^ndz\\
&=r^{\sigma-n}(-\Delta)^s g_r(e_1)=f(x)^q\frac{r^{\sigma-n}}{(1+r^2)^{\frac{(2s-n+\sigma)q}{2}}}(-\Delta)^s g_r(e_1).
\end{aligned}
\end{equation}
On the other hand, a direct computation yields 
\begin{equation}\label{delta-f}
\begin{aligned}
(-\Delta)f(x)&=-(1+r^2)^{\frac{2s-n+\sigma}{2}-1}(2s-n+\sigma)\left((2s-n+\sigma-2)\frac{|x|^2}{1+|x|^2}+n\right)\\
&=f(x)^q\frac{r^{\sigma-n}}{(1+r^2)^{\frac{(2s-n+\sigma)q}{2}}}h(r),
\end{aligned}
\end{equation}
where 
$$
h(r):=-\frac{(1+r^2)^{\frac{2s-n+\sigma}{2}-1}}{r^{2s-n+\sigma-2}}\cdot r^{2s-2}(2s-n+\sigma)\left((2s-n+\sigma-2)\frac{r^2}{1+r^2}+n\right).
$$
Note that 
\begin{equation}\label{h-infty-0}
\lim_{r\rightarrow+\infty}h(r)=0.
\end{equation}
By \eqref{sigma-choose-1}, we have
\begin{equation}\label{com-infty}
\lim_{r\rightarrow\infty}\frac{r^{\sigma-n}}{(1+r^2)^{\frac{(2s-n+\sigma)q}{2}}}=+\infty.
\end{equation}
Combining Lemma \ref{special function-estimate} and  \eqref{s-delta-f}-\eqref{com-infty}, we can find a $R>0$ such that for any $|x|\geq R$,
\begin{equation}\label{exter-equation}
[-\Delta +(-\Delta)^s] f(x)\geq f(x)^q.
\end{equation}
Furthermore, for $\delta>0$ and $u$  a positive supersolution of \eqref{lane-emden}, we consider 
$$
u_\delta(x)=\delta^\frac{2}{q-1}u(\delta x),\quad x\in\mathbb R^n.
$$
Then $u_\delta$ satisfies
\begin{equation}\label{u_delta-equation}
-\Delta u_\delta+\delta^{2(1-s)}(-\Delta)^su_\delta\geq u_\delta^q\quad\text{in }\mathbb R^n. 
\end{equation}
Hence, to construct a positive solution of \eqref{lane-emden}, we only need to construct a positive supersolution of \eqref{u_delta-equation} for some $\delta>0$. By Lemma \ref{special function-estimate}, we know that $(-\Delta)^s f$ is a continuous positive function in $\mathbb R^n$. Now, we take 
$$
\delta=\max\left\{1,\max_{|x|\leq R}\left(\frac{(f^q+\Delta f)_+}{(-\Delta)^sf}\right)^{\frac{1}{2(1-s)}}\right\}, 
$$
then $-\Delta f+\delta^{2(1-s)}(-\Delta)^sf\geq f^q$ in $\{|x|\leq R\}$. Combining this with \eqref{exter-equation} shows that $u_\delta=f$ is a supersolution of \eqref{u_delta-equation}.  Consequently, $\delta^{-\frac2{q-1}} f( \cdot/\delta)$ is a supersolution of \eqref{lane-emden}, which completes the proof.  
\end{proof}

{\bf{Acknowledgments.}} 
Guo is partially supported by the National Natural Science Foundation of China (Grant No. 12501145), the Natural Science Foundation of Shanghai (No. 25ZR1402207),   the China Postdoctoral Science Foundation (No. 2025T180838 and No. 2025M773061), the Postdoctoral Fellowship Program of CPSF (No. GZC20252004), and the Institute of Modern Analysis-A Frontier Research Center of Shanghai.
Li and Xie are partially supported by the National Natural Science Foundation of China (Grant No. W2531006, 12250710674 and 12031012) and the Institute of Modern Analysis-A Frontier Research Center of Shanghai.

\medskip
{\bf{Date availability statement:}} No data was used for the research described in the article.

\medskip
{\bf{Conflict of interest statement:}} On behalf of all authors, the corresponding author states that there is no conflict of interest.


\end{document}